\newtheorem{theorem}{Theorem}
\newtheorem{lemma}[theorem]{Lemma}
\begin{document}

\title{Judicious partitions of uniform hypergraphs}
\author{John Haslegrave
\thanks{Research supported by the Engineering and Physical 
Sciences Research Council}}

\maketitle

\begin{abstract}
The vertices of any graph with $m$ edges may be partitioned into two parts 
so that each part meets at least $\frac{2m}{3}$ edges. Bollob\'as and Thomason 
conjectured that the vertices of any $r$-uniform hypergraph with $m$ edges may 
likewise be partitioned into $r$ classes such that each part meets at least 
$\frac{r}{2r-1}m$ edges. In this paper we prove the weaker statement that,
for each $r\ge 4$, a partition into $r$ classes may be found in which each 
class meets at least $\frac{r}{3r-4}m$ edges, a substantial improvement on previous bounds.
\end{abstract}

\section{Introduction}
The vertices of any graph (indeed, any multigraph) may be partitioned into two parts, each of which meets at 
most two thirds of the edges (\cite{BS93}; it also appears as a problem in \cite{MGT}). An equivalent statement in 
this case is that each part spans at most one third of the edges. These two statements give rise to different 
generalisations when a partition into more than two parts is considered. In this paper we shall only address 
the problem of meeting many edges; the problem of spanning few edges is addressed in \cite{BS99} for the 
graph case and \cite{BS97} for the hypergraph case.

A particularly interesting case occurs when we partition the vertices of an $r$-uniform hypergraph into $r$ 
classes. Bollob\'as and Thomason (see \cite{BRT93}, \cite{BS00}) conjectured that every $r$-uniform hypergraph 
with $m$ edges has an $r$-partition in which each class meets at least $\frac{r}{2r-1}m$ edges.
The author \cite{Hasa} recently proved the conjecture for the case $r=3$. 

The previous best known bound for each $r>3$ was proved by Bollob\'as and Scott \cite{BS00}, who in fact 
obtained a constant independent of $r$. The method they used was to progressively refine a partition by 
repartitioning the vertices in two or three parts to increase the number of parts which met $cm$ edges; in 
doing so they showed that $r$ disjoint parts meeting at least $cm$ edges may be found for $c=0.27$.

Our strategy will be to combine those ideas with the methods used to prove the conjectured bound in the case 
$r=3$ \cite{Hasa}. We shall obtain values of $c_r=\frac{r}{3r-4}$ for $r>3$, and so $c_r \to \frac{1}{3}$ as $r \to \infty$. While this is a significant improvement on previous bounds, it is still some distance from the 
conjectured bounds which approach $\frac{1}{2}$ in the limit.

All results obtained in this paper apply to hypergraphs in which repeated edges are permitted, and in 
fact we shall need this extra generality in an induction step.

\section{New parts from old}
In this section we give two lemmas which show that if we have a partition in which some parts meet many more 
edges than required we may locally refine that partition and increase the number of parts which meet the 
required number of edges. The first of these lemmas was proved in \cite{BS00}; the second is a new result in 
the same spirit. The setting for each lemma is the same. $G$ is a multi-hypergraph, but not necessarily 
uniform; edges may have any number of vertices, and edges of any size may be repeated. $G$ has $m$ edges, but 
the maximum degree of a vertex is less than $cm$ where $c>0$ is an arbitrary constant. For $A,B\subset V$
we write $d(A)$ for the number of edges meeting $A$ and $d(A,B)$ for the number meeting both $A$ and $B$ (we 
shall only use the latter notation where $A$ and $B$ are disjoint).

\begin{lemma}[\cite{BS00}]\label{rutwothree}
Let $c>0$ be a constant and let $G$ be a multi-hypergraph on vertex set $V$ with $m$ edges such that $\Delta(G)<cm$. If $A$ and $B$ are disjoint subsets of $V$, each of which meet at least $2cm$ edges, then there is a partition of $A\cup B$ into three parts, each of which meets at least $cm$ edges.
\end{lemma}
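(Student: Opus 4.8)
The plan is to lean throughout on the following \emph{atomic principle}, which is the only place the degree bound is used: since every vertex meets fewer than $cm$ edges, building a set one vertex at a time changes $d(\cdot)$ by less than $cm$ at each step, so from any vertex set meeting at least $cm$ edges one can greedily extract a subset $S$ with $cm\le d(S)<2cm$. First I would dispose of an easy case. Apply the principle to $A$ to get $A_1\subseteq A$ with $cm\le d(A_1)<2cm$, and set $A_2=A\setminus A_1$. If $d(A_2)\ge cm$ then $(A_1,A_2,B)$ already works, since $d(B)\ge 2cm$; the same holds with $A$ and $B$ interchanged. Hence I may assume that neither $A$ nor $B$ splits into two parts each meeting at least $cm$ edges, so in particular $A_2$ is nonempty but meets fewer than $cm$ edges.

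The task now reduces to partitioning $A_2\cup B$ into two parts each meeting at least $cm$ edges, since $A_1$ can then be adjoined as the third part (the three are disjoint with union $A\cup B$). Here I would exploit the surplus of $B$: starting from the partition $(B,A_2)$ of $A_2\cup B$, transfer the vertices of $B$ one at a time into the part containing $A_2$, stopping the first moment that part meets $cm$ edges; by the atomic principle it then meets fewer than $2cm$. To keep the shrinking part heavy for as long as possible, I would fix a minimal $B_1\subseteq B$ with $d(B_1)\ge cm$ and transfer the vertices of $B\setminus B_1$ before any vertex of $B_1$, so that while only $B\setminus B_1$ has moved the shrinking part still contains $B_1$ and meets at least $cm$ edges.

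The crux is to guarantee that the growing part reaches $cm$ edges before $B_1$ is touched, equivalently that $A_2\cup(B\setminus B_1)$ meets at least $cm$ edges. A naive intermediate-value count gives only that the other part meets more than $d(A_2\cup B)-2cm\ge 0$ edges, which is useless exactly when $A_2\cup B$ meets close to $2cm$ edges; the whole difficulty is concentrated in this near-extremal regime. I would attack it in two steps. First, choose the core $B_1$ (equivalently the light remainder $B\setminus B_1$ merged with $A_2$) so as to maximise $d\bigl(A_2\cup(B\setminus B_1)\bigr)$, and argue that if even the optimal choice falls short then the edges meeting $B$ are forced into a rigid configuration.

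Second—and this is where I expect the genuine work to lie—I would treat that configuration by separating the two ends of each edge meeting both $A$ and $B$ into different parts, so that every such crossing edge is counted in two of the three parts. The hypothesis that \emph{both} $A$ and $B$ meet at least $2cm$ edges, rather than merely that their union does, is precisely what supplies enough independent edge-mass to make all three parts heavy at once; indeed the hypotheses already rule out the degenerate ``additive'' obstructions, since no two vertices of degree below $cm$ can together meet $2cm$ edges, forcing each of $A$ and $B$ to contain at least three substantial vertices. The bound $\Delta(G)<cm$ then prevents any single transfer from overshooting the narrow window in which all three parts are heavy. Confirming that the required separation can always be carried out simultaneously with the heaviness of all three parts is the main obstacle I anticipate.
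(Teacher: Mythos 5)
There is a genuine gap, and it occurs before the part you flag as the anticipated obstacle: the reduction itself is false. Once you fix the greedily extracted $A_1$ as one of the three final parts, you are committed to partitioning $A_2\cup B$ into two parts each meeting $cm$ edges, and this can be impossible even though the lemma's conclusion holds. Concretely, take $c=\tfrac15$, $m=100$; let $A=\{a_1,a_2,a_3\}$ and $B=\{b_1,b_2,b_3\}$, with $18$ edges meeting $A\cup B$ only at $a_1$, $18$ only at $a_2$, $5$ only at $a_3$, $14$ only at each $b_i$, and $17$ edges disjoint from $A\cup B$. Then $\Delta(G)\le 18<cm=20$, $d(A)=41\ge 2cm$ and $d(B)=42\ge 2cm$. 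Your greedy extraction, processing $a_1$ then $a_2$, stops with $A_1=\{a_1,a_2\}$ (indeed $cm\le 36<2cm$) and $A_2=\{a_3\}$ with $d(A_2)=5$. Now any subset of $A_2\cup B$ containing at most one $b_i$ meets at most $14+5=19<cm$ edges, so in every $2$-partition of $A_2\cup B$ at least one part is deficient: no valid $3$-partition of $A\cup B$ has $\{a_1,a_2\}$ as a part. Yet the lemma is true here, via $\{a_1,b_1\},\{a_2,a_3\},\{b_2,b_3\}$, which meet $32$, $23$, $28$ edges. The failure is structural: a correct partition may need to break up the heavy portion of $A$, pairing one heavy vertex of $A$ with vertices of $B$ and using the other to rescue the light remainder $A_2$, and your scheme forecloses this by freezing $A_1$. (Your proposal is also explicitly incomplete in its second half, but the counterexample shows the plan fails earlier, for reasons no choice-of-configuration analysis there can repair.)

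The paper's proof keeps exactly the flexibility your plan gives up. Instead of splitting $A$ into one good piece and a remainder, it splits $A$ into \emph{three} pieces $A_1,A_2,A_3$ such that the union of any two meets at least $cm$ edges (take $A_1$ maximal with $d(A_1)<cm$, then divide $A\setminus A_1$, which meets more than $d(A)-cm\ge cm$ edges, into two nonempty parts), and does the same for $B$. It then suffices to find one pair $(i,j)$ with $d(A_i\cup B_j)\ge cm$, since $A_i\cup B_j$, $A\setminus A_i$, $B\setminus B_j$ is then a good partition; this is guaranteed by an averaging argument over all nine pairs, using (after shrinking edges so that each meets $A$ and $B$ at most once each)
\begin{equation*}
\sum_{i,j}d(A_i\cup B_j)=3d(A)+3d(B)-d(A,B)\ge 3d(A)+2d(B)\ge 10cm,
\end{equation*}
so some term is at least $\tfrac{10}{9}cm$. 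Note that which piece of $A$ gets merged with which piece of $B$ is decided only at the very end, by the averaging; that deferred choice is precisely what your greedy commitment to $A_1$ destroys, and it is the idea your proposal is missing.
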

Lemma \ref{rutwothree} was proved in \cite{BS00}; we provide their proof for completeness.
\begin{proof}
We may replace each edge by a subedge if necessary so that no edge meets either $A$ or $B$ in more than 
one vertex. We may then partition $A$ into three parts, $A_1, A_2, A_3$, such that the union of any two
meets at least $cm$ edges: we may do this by taking $A_1$ to be a maximal part meeting fewer than $cm$ edges,
and then dividing $A\setminus A_1$ into two non-empty parts, since any set which strictly contains $A_1$ must 
meet at least $cm$ edges and $A_2\cup A_3=A\setminus A_1$ meets $d(A)-d(A_1)>2cm-cm=cm$ edges. We find a 
similar partition $B=B_1\cup B_2\cup B_3$. It is sufficient to find $i, j$ such that $d(A_i\cup B_j)\ge cm$; 
then $A_i\cup B_j, A\setminus A_i, B\setminus B_j$ will be a suitable partition. We shall show that this is 
always possible.

Since each edge meets $A$ in at most one place then $d(A)=\sum_id(A_i)$ (and likewise $d(B)=\sum_id(B_i)$). 
Similarly, if an edge meets both $A$ and $B$ then we may find unique $i, j$ for which it meets $A_i$ and $B_j$. 
Therefore,
\begin{equation*}
d(A,B)=\sum_{i,j}d(A_i,B_j).
\end{equation*}
Also,
\begin{equation*}
d(A_i\cup B_j)=d(A_i)+d(B_j)-d(A_i,B_j),
\end{equation*}
so
\begin{eqnarray*}
\sum_{i,j}d(A_i\cup B_j) &=& \sum_{i,j}\left(d(A_i)+d(B_j)-d(A_i,B_j)\right)\\
 &=& 3d(A)+3d(B)-\sum_{i,j}d(A_i,B_j)\\
 &=& 3d(A)+3d(B)-d(A,B)\\
 &\ge& 3d(A)+2d(B)\ge 10cm.
\end{eqnarray*}
Since there are nine terms in the LHS, at least one must exceed $cm$. 
\end{proof}

This proof shows more than required: we can always find a partition of $A\cup B$ into three parts, two meeting at least $cm$ edges and the third meeting at least $\frac{10cm}{9}$. However, we cannot always find a partition into three parts, two meeting more than $cm+1$ edges and the third meeting at least $cm$, as seen by considering the case where each of $A$ and $B$ have three vertices, two meeting $cm-1$ edges and one meeting 2 edges. 

A part which does not meet the desired number of edges can also be useful provided we have another part meeting sufficiently many edges to combine with it.

\begin{lemma}\label{rugoodbad}
Let $c>0$ be a constant and let $G$ be a multi-hypergraph on vertex set $V$ with $m$ edges such that $\Delta(G)<cm$. If $A$ and $B$ are disjoint subsets of $V$, with $d(A)\ge 2cm$ and $d(A)+2d(B)\ge 3cm$, then there is a partition of $A\cup B$ into two parts, each of which meets at least $cm$ edges.
\end{lemma}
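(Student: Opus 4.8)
The plan is to begin, as in Lemma~\ref{rutwothree}, by replacing each edge with a subedge so that no edge meets $A$ in more than one vertex nor $B$ in more than one vertex. After this reduction $d$ is additive over subsets of $A$ and over subsets of $B$, and every edge meeting $A\cup B$ does so in at most two vertices, at most one in each of $A$ and $B$; the relevant part of $G$ thus behaves like a bipartite multigraph between $A$ and $B$ together with some edges pendant at a single vertex. I would first dispose of the easy case $d(B)\ge cm$: here $X=B$, $Y=A$ is a valid partition, since $d(B)\ge cm$ and $d(A)\ge 2cm\ge cm$. So I may assume $d(B)<cm$.

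In the main case all of $B$ goes into one class. Writing $a_v=d(\{v\})$ for the degree of a vertex $v\in A$ and $p_v$ for the number of edges meeting $A$ only at $v$ (its ``pendant'' edges), I would take $X=A_1$, $Y=A_2\cup B$ for a suitable split $A=A_1\cup A_2$. The gain from putting $B$ entirely on one side is that every edge meeting $A$ which is not pendant at its $A$-vertex must also meet $B$, so all such edges fall in $Y$; this gives the clean identities $d(X)=\sum_{v\in A_1}a_v$ and $d(Y)=d(B)+\sum_{v\in A_2}p_v$. The whole problem therefore reduces to finding $A_1\subseteq A$ with $\sum_{v\in A_1}a_v\ge cm$ and $\sum_{v\in A\setminus A_1}p_v\ge cm-d(B)$.

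To produce such a set I would build $A_1$ greedily, adding vertices of $A$ in order of decreasing crossing-degree $a_v-p_v$ and stopping as soon as $\sum_{A_1}a_v\ge cm$. Putting the crossing-heavy vertices into $X$ is efficient, since their non-pendant edges cost nothing against the pendant budget that $Y$ needs, so as much pendant weight as possible is retained for $A\setminus A_1$. Because each $a_v<\Delta(G)<cm$, the greedy partial sums increase in steps smaller than $cm$, so the final sum lands in $[cm,2cm)$; it then remains to check that the leftover pendant weight is at least $cm-d(B)$, which is exactly where the hypotheses $d(A)\ge 2cm$ and $d(A)+2d(B)\ge 3cm$ must be used.

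The hard part will be extracting the factor $2$ on $d(B)$. A naive accounting, in which the single greedy ``overshoot'' vertex may waste up to $cm$ worth of weight, only delivers the weaker condition $d(A)+d(B)\ge 3cm$. To recover the stated bound I expect to have to treat the last, critical vertex of the greedy process separately --- either by choosing it so that its pendant edges are not wasted, or by an exchange argument that charges the overshoot against the crossing edges already counted twice in $2d(B)$ --- together with the observation that the greedy partial sums change by less than $cm$ at each step, so the two thresholds can be met simultaneously. This tight bookkeeping, rather than the construction itself, is where the real work lies.
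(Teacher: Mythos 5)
Your setup is sound as far as it goes: the reduction to subedges, the easy case $d(B)\ge cm$, the decision to place all of $B$ in one class (the paper's proof does the same, producing a partition of the form $A_i\cup B$, $A\setminus A_i$), and the identities $d(A_1)=\sum_{v\in A_1}a_v$ and $d(A_2\cup B)=d(B)+\sum_{v\in A_2}p_v$ are all correct. But the argument stops exactly where the lemma lives. You reduce everything to finding $A_1\subseteq A$ with $\sum_{v\in A_1}a_v\ge cm$ and $\sum_{v\in A\setminus A_1}p_v\ge cm-d(B)$, concede that your accounting only reaches the weaker hypothesis $d(A)+d(B)\ge 3cm$, and defer the factor $2$ on $d(B)$ to an unspecified exchange argument or ``tight bookkeeping''. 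That deferred step is not a routine verification; it is the entire content of the lemma, so what you have is a plan rather than a proof.

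Moreover, the greedy rule you actually state (add vertices of $A$ in decreasing order of crossing degree $a_v-p_v$, stop once $\sum a_v\ge cm$) genuinely fails. Take $c=\frac{1}{4}$ and $m=100$, so $cm=25$; let $A=\{u,w,z\}$ with $a_u=a_w=24$, $a_z=2$, let $d(B)=13$, with crossing degrees $x_u=x_w=6$, $x_z=0$ (so $p_u=p_w=18$, $p_z=2$), and pad $m$ with edges avoiding $A\cup B$. All hypotheses hold: $d(A)=50=2cm$, $d(A)+2d(B)=76\ge 75=3cm$, $\Delta(G)<cm$. Your greedy adds $u$ and then $w$ (the two highest-crossing vertices) and stops, leaving only $p_z=2$ of pendant weight in the tail, so $d(Y)=13+2=15<25$ --- even though a good partition of the required shape exists, namely $\{u,z\}$ against $\{w\}\cup B$. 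Nor can a single exchange (swapping the overshoot vertex against the whole tail) repair this in general: two candidate partitions with disjoint $A$-sides extract at most $2d(B)-d(A,B)\ge d(B)$ of help from $B$, while the hypothesis only supplies $d(A)+2d(B)\ge 3cm$, so the factor $2$ is out of reach. The paper obtains that factor by averaging over \emph{three} candidates: it splits $A$ into $A_1,A_2,A_3$ so that the union of any two meets at least $cm$ edges (take $A_1$ maximal meeting fewer than $cm$ edges and split the remainder into two nonempty parts, as in Lemma~\ref{rutwothree}), and then $\sum_i d(A_i\cup B)=d(A)+3d(B)-d(A,B)\ge d(A)+2d(B)\ge 3cm$ forces some $d(A_i\cup B)\ge cm$, while $d(A\setminus A_i)\ge cm$ holds automatically. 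That three-way averaging is precisely the idea your proposal is missing.
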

\begin{proof}
As before, we may replace each edge by a subedge if necessary so that no edge meets either $A$ or $B$ in 
more than one vertex. We may then again partition $A$ into three parts $A_1, A_2, A_3$ such that the union 
of any two meets at least $cm$ edges. It is now sufficient to find $i$ such that $d(A_i\cup B)\ge cm$; then 
$A_i\cup B, A\setminus A_i$ will be a suitable partition. We claim this is always possible.

Again,
\begin{equation*}
d(A,B)=\sum_{i}d(A_i,B)
\end{equation*}
and
\begin{equation*}
d(A_i\cup B)=d(A_i)+d(B)-d(A_i,B),
\end{equation*}
so
\begin{eqnarray*}
\sum_{i}d(A_i\cup B) &=& \sum_{i}\left(d(A_i)+d(B)-d(A_i,B)\right)\\
 &=& d(A)+3d(B)-\sum_{i}d(A_i,B)\\
 &=& d(A)+3d(B)-d(A,B)\\
 &\ge& d(A)+2d(B)\ge 3cm.
\end{eqnarray*}
Since there are three terms in the LHS, at least one must be at least $cm$. 
\end{proof}
In particular, we may find such a partition when $d(A)\ge 2cm$ and $d(B)\ge \frac{cm}{2}$; 
this is the only case in which we shall apply Lemma \ref{rugoodbad}.

\section{The bounds}
Throughout this section $G$ is an $r$-uniform multi-hypergraph on vertex set $V$ with $m$ edges. 
Our goal is to prove that there is a partition of $V$ into $r$ parts such that each part meets at least 
$c_r m$ edges for some suitable constant $c_r$. In order to use Lemmas \ref{rugoodbad} and 
\ref{rutwothree} we need to reduce to the case $\Delta(G)<c_rm$. To that end we apply induction on $r$. If 
some vertex meets at least $c_r m$ edges then we may remove that vertex, replacing each edge of $G$ by a subedge of 
size $r-1$ not containing that vertex, and then use the result for $r-1$ to partition the remaining vertices 
into $r-1$ parts, each meeting at least $c_{r-1} m$ edges; this is sufficient so long as $c_{r-1} \ge c_r$, 
which will be the case. We shall take $c_2=\frac{2}{3}$, so the case $r=2$ is known.

Our plan will be to look at partitions for which $\sum_id(V_i)$ is large, and show that if some parts meet too 
few edges then there are enough parts meeting at least $2c_rm$ edges to allow us to construct a good partition 
by combining parts as above. 

Certainly any partition which is optimal in the sense of maximising $\sum_id(V_i)$ also satisfies the 
local optimality condition that $\sum_id(V_i)$ cannot be increased by moving a single vertex. We shall
consider particularly the yet weaker condition that $\sum_id(V_i)$ cannot be increased by moving a single
vertex into $V_r$. We begin by establishing bounds on partitions satisfying this condition.

\begin{lemma}\label{aaa}
Let $V_1, V_2, \ldots, V_r$ be a partition for which $\sum_i d(V_i)$ cannot be increased by
moving a vertex into $V_r$. Then 
\begin{equation*}
\sum_{i=1}^r d(V_i) \ge (r+1)m-rd(V_r).
\end{equation*}
\end{lemma}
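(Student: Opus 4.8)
The plan is to rewrite $\sum_i d(V_i)$ by counting, for each edge, how many parts it meets, and then to extract from the local optimality hypothesis a lower bound controlling how often each edge meets a part in a single vertex. For an edge $e$ write $t(e)$ for the number of parts among $V_1,\dots,V_r$ that $e$ meets; since $d(V_i)=\sum_e \mathbf 1[e\text{ meets }V_i]$, interchanging the order of summation gives $\sum_i d(V_i)=\sum_e t(e)$. It therefore suffices to show $\sum_e t(e)\ge (r+1)m-rd(V_r)$.

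First I would translate the hypothesis into a per-vertex inequality. Fix a vertex $v\notin V_r$, lying in part $V_j$, and move it into $V_r$. This increases $d(V_r)$ by the number $a(v)$ of edges through $v$ that miss $V_r$, and decreases $d(V_j)$ by the number $b(v)$ of edges through $v$ for which $v$ is the only vertex of that edge lying in $V_j$; no other part changes. The assumption that $\sum_i d(V_i)$ cannot be increased then reads $a(v)\le b(v)$ for every $v\notin V_r$.

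Next I would sum this over all $v\notin V_r$. On the left, each edge missing $V_r$ has all $r$ of its vertices outside $V_r$ and so is counted once for each of them, while edges meeting $V_r$ contribute nothing; hence $\sum_{v\notin V_r}a(v)=r\bigl(m-d(V_r)\bigr)$. On the right, a fixed edge $e$ is counted in $b(v)$ exactly once for each part other than $V_r$ that $e$ meets in precisely one vertex, so $\sum_{v\notin V_r}b(v)=\sum_e s(e)$, where $s(e)$ denotes the number of parts other than $V_r$ that $e$ meets in a single vertex. Summing the per-vertex inequalities gives $\sum_e s(e)\ge r\bigl(m-d(V_r)\bigr)$.

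The crux is then a clean per-edge comparison of $t(e)$ with $s(e)$: I claim $t(e)-s(e)\ge 1$ for every edge. The parts contributing to $t(e)$ but not to $s(e)$ are $V_r$ (when met) together with the non-$V_r$ parts met in at least two vertices, so $t(e)-s(e)=\mathbf 1[e\text{ meets }V_r]+\#\{j\ne r:|e\cap V_j|\ge 2\}$. If $e$ meets $V_r$ the first term already gives $1$; if not, then all $r$ vertices of $e$ lie in the $r-1$ parts other than $V_r$, so by the pigeonhole principle some such part contains at least two of them and the second term is at least $1$. Combining, $\sum_e t(e)=\sum_e s(e)+\sum_e\bigl(t(e)-s(e)\bigr)\ge r\bigl(m-d(V_r)\bigr)+m=(r+1)m-rd(V_r)$, as required. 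I expect the only real obstacle to be setting up the two summations of step three correctly and confirming exactly which edges contribute to each; once $\sum_e s(e)\ge r\bigl(m-d(V_r)\bigr)$ is in hand, the pigeonhole gap $t(e)-s(e)\ge 1$ finishes the argument immediately.
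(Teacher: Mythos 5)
Your proof is correct and is essentially the paper's own argument: your $t(e)$ is the paper's $f(e)$, your summed per-vertex inequality $\sum_e s(e)\ge r\bigl(m-d(V_r)\bigr)$ is exactly the paper's double count over vertices $v\notin V_r$, and your pigeonhole claim $t(e)-s(e)\ge 1$ is precisely the paper's observation that each edge lies in at most $f(e)-1$ of the sets $E_i=\{e:|e\cap V_i|=1\}$. If anything, your write-up is the cleaner one, since the paper's displayed inequalities in this proof have their directions typeset backwards (the prose and the conclusion make the intended directions clear), while yours are stated consistently.
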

\begin{proof}
For each $i<r$ and each $v\in V_i$, since moving $v$ into $V_r$ does not increase the sum, the number of edges
$e$ such that $e\cap V_i = \{v\}$ must be at least the number of edges $e$ containing $v$ which do not meet 
$V_r$: the first quantity is the decrease in $d(V_i)$ effected by moving $v$ and the second is the increase 
in $d(V_r)$. Thus
\begin{equation*}
\sum_{i\ne r}\sum_{v\in V_i}|\{e:e\cap V_i={v}\}|\le \sum_{i\ne r}\sum_{v\in V_i}|\{e\ni v: e\cap V_r =\emptyset\}|.
\end{equation*}
Since, for $v\ne w$, $\{e:e\cap V_i={v}\}$ and $\{e:e\cap V_i={w}\}$ are disjoint,
\begin{eqnarray*}
\sum_{i\ne r}\sum_{v\in V_i}|\{e:e\cap V_i={v}\}| &=& \sum_{i\ne r}|\bigcup_{v\in V_i}\{e:e\cap V_i={v}\}| \\
 &=& \sum_{i\ne r}|\{e:|e\cap V_i|=1\}|.
\end{eqnarray*}
For $1\le i <r$, let $E_i=\{e:|e\cap V_i|=1\}$. For each edge $e$ write $f(e)$ for the number of parts (including $V_r$)
which meet $e$. If $f(e)<r$ then $|e\cap V_i|>1$ for some $i$, and so $e$ is in at most $f(e)-1$ of the $E_i$; 
trivially if $f(e)=r$ then $e$ is in at most $r-1=f(e)-1$ of the $E_i$. Thus 
\begin{eqnarray*}
\sum_{i\ne r}|\{e:|e\cap V_i|=1\}| &\le& \sum_{e}(f(e)-1) \\
 &=& \sum_{i=1}^r d(V_i)-m.
\end{eqnarray*}
Also,
\begin{equation*}
\sum_{i\ne r}\sum_{v\in V_i}|\{e\ni v: e\cap V_r =\emptyset\}|=r(m-d(V_r)),
\end{equation*}
since each edge not meeting $V_r$ is counted exactly $r$ times in the sum, once for each vertex it contains. 
Combining the above relations, we see that
\begin{equation*}
\sum_{i=1}^r d(V_i)-m \le r(m-d(V_r)),
\end{equation*}
as required.
\end{proof}

The reason for considering the condition that $\sum_id(V_i)$ cannot be increased by moving a single
vertex into $V_r$ is that, as we shall see, it is preserved by moving vertices into $V_r$. If we are able 
to start from a partition which satisfies the condition and in which $V_1,\ldots, V_{r-1}$ are ``good'' (in 
the sense of meeting at least a certain proportion of edges) then we can try to improve the partition by 
moving vertices into $V_r$ while keeping the other parts good. In this way we will either obtain a partition
into $r$ good parts or we will be forced to stop because $V_1,\ldots, V_{r-1}$ are all minimal good sets. We formalise these ideas in the following lemma.

\begin{lemma}\label{aab}
Let $V_1, V_2, \ldots, V_r$ be a partition for which $\sum_i d(V_i)$ is as large as possible, ordered such that
$d(V_1)\ge d(V_2)\ge \cdots \ge d(V_r)$, and let $c>0$ be a constant. If $d(V_{r-1})\ge cm$ then either there 
exists a partition $W_1, W_2, \ldots, W_r$, with $W_i\subseteq V_i$ for $1\le i\le r-1$ 
(and so $W_r\supseteq V_r$), such that each part meets at least $cm$ edges, or there exists a partition 
$W_1, W_2, \ldots, W_r$, again with $W_i\subseteq V_i$ for $1\le i\le r-1$,
such that, for each $i\ne r$, $d(W_i)\ge cm$ but $d(W_i\setminus \{w\})< cm$ for any $w\in W_i$, and also
\begin{equation*}
\sum_{i=1}^{r-1} d(W_i) > (r+1)(m-cm).
\end{equation*}
\end{lemma}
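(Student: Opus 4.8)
The plan is to begin from the optimal partition $V_1,\dots,V_r$, which trivially satisfies the hypothesis of Lemma~\ref{aaa} (an optimal partition cannot have its sum increased by any move), and then to greedily shrink the good parts by repeatedly moving a vertex of some $V_i$ ($i<r$) into $V_r$, stopping either when $V_r$ has become good or when no vertex can be removed without making some $V_i$ fail to be good. The engine that drives this is the fact, foreshadowed before the statement, that the hypothesis of Lemma~\ref{aaa} is preserved by precisely such moves.

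I would establish this preservation first, as it is the one point genuinely requiring care. Read vertex-by-vertex, the hypothesis asserts that for every $i<r$ and every $v\in V_i$ we have $|\{e:e\cap V_i=\{v\}\}|\ge|\{e\ni v:e\cap V_r=\emptyset\}|$, the two sides being the loss to $d(V_i)$ and the gain to $d(V_r)$ incurred by moving $v$. Now move some $v_0\in V_k$ with $k<r$ into $V_r$. Enlarging $V_r$ can only shrink the family of edges avoiding it, so the right-hand side cannot increase for any remaining vertex; and deleting $v_0$ from $V_k$ can only enlarge each set $\{e:e\cap V_k=\{v\}\}$ (an edge that met $V_k$ exactly in $\{v,v_0\}$ now meets the smaller part exactly in $v$), while the sets for $i\ne k$ are untouched. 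Hence the inequality survives for every vertex, so the new partition again satisfies the hypothesis of Lemma~\ref{aaa}.

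With preservation in hand the process is routine. Since $d(V_{r-1})\ge cm$ and the parts are ordered by degree, every $V_i$ with $i<r$ initially meets at least $cm$ edges. Setting $W_i:=V_i$, I repeat: if $d(W_r)\ge cm$, stop; else, if some $W_i$ ($i<r$) contains a vertex $w$ with $d(W_i\setminus\{w\})\ge cm$, move $w$ into $W_r$; otherwise stop. Each non-terminating step enlarges $W_r$, so the process halts; throughout we keep $W_i\subseteq V_i$ and each $W_i$ ($i<r$) good, and by the previous paragraph the partition still satisfies the hypothesis of Lemma~\ref{aaa}. Stopping with $d(W_r)\ge cm$ gives $r$ good parts, the first alternative. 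Otherwise we stop with $d(W_r)<cm$ and with no removable vertex, so each $W_i$ ($i<r$) is good while $d(W_i\setminus\{w\})<cm$ for every $w\in W_i$, exactly as demanded by the second alternative.

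Finally I would verify the displayed bound in this second case, where Lemma~\ref{aaa} does the real work. Applied to the final partition it gives $\sum_{i=1}^r d(W_i)\ge(r+1)m-r\,d(W_r)$, so $\sum_{i=1}^{r-1}d(W_i)\ge(r+1)m-(r+1)d(W_r)=(r+1)(m-d(W_r))$; as $d(W_r)<cm$ strictly, the right-hand side exceeds $(r+1)(m-cm)$, which is the claim. The only nontrivial ingredient is the preservation step: without it the final partition, no longer optimal, would not be known to satisfy the hypothesis of Lemma~\ref{aaa}, and the bound would be out of reach.
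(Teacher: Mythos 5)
Your proof is correct and takes essentially the same approach as the paper: the paper establishes that \emph{any} partition with $W_i\subseteq V_i$ for $i<r$ inherits the hypothesis of Lemma~\ref{aaa} by a one-shot comparison of losses and gains against the optimal partition, and then takes minimal good subsets, whereas you establish the same monotonicity facts move-by-move and reach the minimal sets by a greedy deletion process. The concluding application of Lemma~\ref{aaa}, giving $\sum_{i=1}^{r-1}d(W_i)\ge(r+1)(m-d(W_r))>(r+1)(m-cm)$, is identical to the paper's.
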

\begin{proof}
Suppose $U_1, U_2, \ldots, U_r$ is a partition with $U_i\subseteq V_i$ for each $i<r$ and so $U_r\supseteq V_r$. 
For any $v\notin U_r$, say $v\in U_i$, let $U'_i=U_i\setminus\{v\}$, $U'_r=U_r\cup\{v\}$, $V'_i=V_i\setminus\{v\}$ 
and $V'_r=V_r\cup\{v\}$. Then
\begin{eqnarray*}
d(U_i)-d(U'_i)&=&|\{e:e\cap U_i=\{v\}\}| \\
 &\ge&|\{e:e\cap V_i=\{v\}\}| \\
 &=&d(V_i)-d(V'_i)
\end{eqnarray*}
and
\begin{eqnarray*}
d(U'_r)-d(U_r)&=&|\{e\ni v:e\cap U_r=\emptyset\}| \\
 &\le&|\{e\ni v:e\cap V_r=\emptyset\}| \\
 &=&d(V'_r)-d(V_r)
\end{eqnarray*}
so
\begin{eqnarray*}
d(U'_i)+d(U'_r) &\le& \left(d(U_i)+d(V'_i)-d(V_i)\right)+\left(d(V'_r)-d(V_r)+d(U_r)\right) \\
 &=&d(U_i)+d(U_r)+\left(d(V'_i)+d(V'_r)-d(V_i)-d(V_r)\right) \\
 &\le&d(U_i)+d(U_r),
\end{eqnarray*}
meaning that $U_1, U_2, \ldots, U_r$ also satisfies the condition that $\sum_i d(U_i)$ cannot be increased by moving a vertex into $U_r$.

For each $i<r$, then, let $W_i$ be a minimal subset of $V_i$ satisfying $d(W_i)\ge cm$, and let $W_r=V\setminus\bigcup_{i<r}W_i$. 
If $d(W_r)\ge cm$ then this is a suitable partition with each part meeting at least $cm$ edges; if not then Lemma \ref{aaa} ensures that
\begin{eqnarray*}
\sum_{i=1}^{r-1} d(W_i) &\ge& (r+1)m-rd(W_r)-d(W_r) \\
 &>&(r+1)(m-cm),
\end{eqnarray*}
as required. 
\end{proof}

We are now ready to prove the main result. We shall show that if we start from a partition maximising $\sum_id(V_i)$ then either we have enough elbow room to obtain a good partition by repeated application of Lemmas \ref{rutwothree} and \ref{rugoodbad} or we may use Lemma \ref{aab} to obtain a good partition.

\begin{theorem}\label{rumain}
Let $c_2=\frac{2}{3}$, $c_3=\frac{5}{9}$ and $c_r=\frac{r}{3r-4}$ for $r>3$. If $G$ is an $r$-uniform 
multi-hypergraph with $m$ edges there is a partition of the vertex set into $r$ parts with each part meeting at 
least $c_r m$ edges.
\end{theorem}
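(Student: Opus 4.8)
The plan is to prove the theorem by induction on $r$, with the base cases $r=2$ (the classical multigraph result) and $r=3$ (the author's earlier work) supplied externally. For the inductive step with $r>3$, I would first reduce to the bounded-degree case. If some vertex $v$ meets at least $c_r m$ edges, then I delete $v$, shrink every edge to a subedge of size $r-1$ avoiding $v$, and apply the result for $r-1$ to obtain $r-1$ parts each meeting at least $c_{r-1}m$ edges; together with $\{v\}$ this gives $r$ good parts, since one readily checks $c_{r-1}\ge c_r$ so $c_{r-1}m\ge c_r m$. Thus I may assume $\Delta(G)<c_r m$, which is precisely the hypothesis needed to invoke Lemmas \ref{rutwothree} and \ref{rugoodbad}.

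Now I would fix a partition $V_1,\dots,V_r$ maximising $\sum_i d(V_i)$, ordered so that $d(V_1)\ge\cdots\ge d(V_r)$. The dichotomy is governed by how many parts are already good. If all $r$ parts meet at least $c_r m$ edges we are done, so suppose not. The key quantitative lever is Lemma \ref{aaa}, which (since a global maximiser certainly cannot be improved by moving a vertex into $V_r$) gives $\sum_i d(V_i)\ge (r+1)m-r\,d(V_r)$. Because $d(V_r)$ is the smallest degree and $\sum_i d(V_i)\le rm$, this forces $d(V_r)\ge m/r$, and more usefully it forces the large parts to be large: if few parts are good, the total $\sum_i d(V_i)$ is squeezed between the lower bound from Lemma \ref{aaa} and the trivial upper bound, which compels several of the top parts to meet at least $2c_r m$ edges. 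The strategy is then to feed pairs of such large parts into Lemma \ref{rutwothree}, each application splitting two parts into three good parts (a net gain of one good part), and to use Lemma \ref{rugoodbad} to absorb a part meeting at least $c_r m/2$ edges into a part meeting at least $2c_r m$ edges, again manufacturing good parts. The arithmetic I must verify is that the reservoir of parts meeting $2c_r m$ edges is large enough, relative to the deficiency in the number of good parts, to carry out enough such operations; here the specific value $c_r=\frac{r}{3r-4}$ should be exactly what makes the count balance.

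The alternative branch uses Lemma \ref{aab}. If the repartitioning above does not immediately close the problem, I appeal to Lemma \ref{aab} with $c=c_r$: provided $d(V_{r-1})\ge c_r m$, either it directly yields $r$ good parts $W_1,\dots,W_r$, or it produces parts $W_1,\dots,W_{r-1}$ that are each \emph{minimal} good sets with $\sum_{i=1}^{r-1} d(W_i) > (r+1)(m-c_r m)$. In the latter case minimality is the crucial structural fact: since removing any vertex from a minimal $W_i$ drops its degree below $c_r m$, and $\Delta(G)<c_r m$, each $W_i$ meets at least $c_r m$ but strictly fewer than $2c_r m$ edges, so $d(W_i)<2c_r m$ for every $i<r$. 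Summing this cap over the $r-1$ parts and comparing with the lower bound $\sum_{i=1}^{r-1}d(W_i)>(r+1)(m-c_r m)$ yields the inequality $(r-1)\cdot 2c_r m > (r+1)(1-c_r)m$, and substituting $c_r=\frac{r}{3r-4}$ I expect this to reduce to a contradiction (or to pin down exactly where $W_r$ must itself be good). Reconciling the two branches so that every possible configuration of part-degrees is covered, and checking that the value $c_r=\frac{r}{3r-4}$ is simultaneously consistent with all the threshold inequalities ($2c_r m$ for Lemma \ref{rutwothree}, $c_r m/2$ for Lemma \ref{rugoodbad}, and the minimal-set bound), is the main obstacle: the real work is a careful case analysis on how many of $V_1,\dots,V_r$ exceed the $2c_r m$ threshold, with the extremal arithmetic tight at precisely this constant.
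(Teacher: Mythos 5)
Your overall architecture matches the paper's: induction on $r$ with the max-degree reduction, a partition maximising $\sum_i d(V_i)$, Lemma \ref{aab} in one branch and repartitioning via Lemmas \ref{rutwothree} and \ref{rugoodbad} in the other. But there is a genuine gap in your Lemma \ref{aab} branch: the cap you extract from minimality, $d(W_i)<2c_rm$, is too weak to yield a contradiction. Comparing $(r-1)\cdot 2c_rm$ with $(r+1)(1-c_r)m$ only gives a contradiction when $c_r\le\frac{r+1}{3r-1}$, and one checks $\frac{r}{3r-4}>\frac{r+1}{3r-1}$ (cross-multiplying, $3r^2-r>3r^2-r-4$), so for every $r\ge4$ your two inequalities are perfectly consistent and the branch does not close. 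The paper needs a sharper count: writing $d_2(W_i)$ for the number of edges meeting $W_i$ in more than one vertex, minimality applied at \emph{two} vertices $v,w\in W_i$ (which exist since $\Delta(G)<c_rm$ forces $|W_i|>1$) gives the stronger bound $d(W_i)+d_2(W_i)<2c_rm$, because the edges meeting $W_i$ only at $v$, those meeting it only at $w$, and those meeting it at least twice are pairwise disjoint. Crucially, since each edge has $r$ vertices and there are only $r-1$ parts $W_1,\dots,W_{r-1}$, every edge missing $W_r$ meets some $W_i$ at least twice, so $\sum_{i<r}d_2(W_i)>m-c_rm$. Adding this to the lower bound on $\sum_{i<r}d(W_i)$ gives $\sum_{i<r}\bigl(d(W_i)+d_2(W_i)\bigr)>(r+2)(m-c_rm)$, and now the pigeonhole comparison forces $c_r>\frac{r+2}{3r}$, which \emph{does} contradict $c_r=\frac{r}{3r-4}\le\frac{r+2}{3r}$ for $r\ge4$. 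Without the $d_2$ bookkeeping this case simply cannot be finished at the claimed constant.

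A second, lesser gap: in the repartitioning branch you assert that ``the arithmetic should balance'' but never carry it out, and this is where the real case analysis lives. The paper splits on whether $d(V_r)\ge\frac{c_rm}{2}$: when it is, one needs $k\ge l$ (parts above $2c_rm$ versus parts below $c_rm$), which follows from Lemma \ref{aaa} together with the observation that $l\ge2$ and that $c_r=\frac{r}{3r-4}$ makes the inequality exactly tight; when $d(V_r)<\frac{c_rm}{2}$, one needs $j\ge k+2l$ (each application of Lemma \ref{rutwothree} consumes \emph{two} parts above $2c_rm$), and this requires a separate combinatorial lemma (Lemma \ref{rulast}, proved by its own induction) applied after showing the average degree exceeds $\max\left(2c_r,\frac{2}{3}+\frac{c_r}{6}\right)$. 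Neither counting argument is routine, and your sketch gives no indication of how the deficit in good parts is matched against the reservoir of large parts.
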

\begin{proof}
We use induction on $r$; the case $r=2$ is known. For $r>2$, if some vertex $v$ meets at least 
$c_r m$ edges then we may, by replacing each edge with a subedge of size $r-1$ not containing $v$ and applying 
the $r-1$ case, find a partition of the other vertices into $r-1$ parts each meeting at least $c_{r-1}m > c_rm$ 
edges. Together with $\{v\}$, this is a suitable $r$-partition. Thus we may assume that no vertex meets $c_r m$ 
edges.

Now let $V_1, V_2, \ldots, V_r$ be a partition for which $\sum_i d(V_i)$ is as large as possible, ordered such 
that $d(V_1)\ge d(V_2)\ge \cdots \ge d(V_r)$. If $d(V_r)\ge c_r m$ then we are done, so we may assume 
$d(V_r)< c_r m$. We consider three cases based on the values of $d(V_{r-1})$ and $d(V_r)$.

\vspace{5pt}
\textbf{Case 1.} $d(V_{r-1})\ge c_r m$.

By Lemma \ref{aab}, either we have a good partition or we may find a partition $W_1, W_2, \ldots, W_r$ such that, for each
$i<r$, $d(W_i)\ge c_r m$ but $W_i$ is minimal for this property, and further that $\sum_{i=1}^{r-1} d(W_i) \ge (r+1)(m-c_r m)$.
Suppose the partition found is of the latter type. For each $i<r$, since $\Delta(G)< c_r m$, $|W_i|>1$. Let $v, w$ be two 
vertices in $W_i$; by minimality of $W_i$ the number of edges meeting $W_i$ only at $v$ is more than $d(W_i)-c_r m$, and so 
is the number meeting $W_i$ only at $w$. These sets of edges are disjoint from each other and from the set of edges 
meeting $W_i$ in more than one vertex; thus, writing $d_2(X)$ for the number of edges meeting $X$ in more than one vertex,
\begin{equation*}
d(W_i) > 2(d(W_i)-c_r m)+d_2(W_i),
\end{equation*}
i.e.
\begin{equation*}
d(W_i)+d_2(W_i)<2c_r m.
\end{equation*}
However, each edge not meeting $W_r$ meets at least one other part at more than one vertex, so
\begin{equation*}
\sum_{i=1}^{r-1}d_2(W_i)>m-c_r m.
\end{equation*}
Combining this with the bound on $\sum_{i=1}^{r-1} d(W_i)$ gives
\begin{equation*}
\sum_{i=1}^{r-1}(d(W_i)+d_2(W_i))>(r+2)(m-c_r m),
\end{equation*}
so for some $i<r$
\begin{equation*}
d(W_i)+d_2(W_i)>\frac{r+2}{r-1}(m-c_r m).
\end{equation*}
Consequently, using our upper bound on $d(W_i)+d_2(W_i)$,
\begin{eqnarray*}
&&\frac{r+2}{r-1}(m-c_r m) < 2c_r m \\
&\Rightarrow&(r+2)(1-c_r) < 2c_r(r-1) \\
&\Rightarrow& r+2 < 3rc_r
\end{eqnarray*}
and so $c_r > \frac{r+2}{3r}$. 

For $r=3$, $c_r=\frac{5}{9}=\frac{r+2}{3r}$; for $r\ge 4$,
\begin{eqnarray*}
c_r &=& \frac{r}{3r-4} \\
&=& \frac{r+2}{3r}-\frac{2-8r}{3r(3r-4)} \\
&\le& \frac{r+2}{3r};
\end{eqnarray*}
in either case a contradiction is obtained (from our assumption that we did not get a good partition from Lemma \ref{aab}). (End of Case 1.)
\vspace{1.5ex}

Note that, using Lemma \ref{aaa}, if $d(V_r) < c_r m$ then 
\begin{eqnarray*}
d(V_{r-1})-c_r m&=&\sum_{i=1}^{r} d(V_i)-\sum_{i=1}^{r-2} d(V_i)-d(V_r)-c_rm \\
&>& \sum_{i=1}^{r} d(V_i)-(r-2)m-2c_r m\\
&\ge& (r+1)m-rd(V_r)-(r-2)m -2c_r m\\
&>& (r+1)m-rc_rm-(r-2)m -2c_r m\\
&=& 3m-(r+2)c_r m,
\end{eqnarray*}
so for $r=3$, since $c_r < \frac{3}{r+2}$, Case 1 is the only possible case. For the remaining cases, then, we assume $r\ge 4$.

\vspace{5pt}
\textbf{Case 2.} $c_r m > d(V_{r-1}) \ge d(V_r) \ge \frac{c_rm}{2}$.

Suppose that $k$ parts meet at least $2c_r m$ edges, $l$ meet fewer than $c_r m$, and the remaining $r-k-l$ meet at least $c_r m$
but fewer than $2c_r m$. Using Lemma \ref{rugoodbad} we may combine a part meeting at least $2c_r m$ edges and a part meeting at least
$\frac{c_rm}{2}$ to produce two parts meeting at least $c_r m$; we know, since $V_r$ meets fewest edges, that each part meets at
least $\frac{c_rm}{2}$ and so we can obtain a good partition provided $k\ge l$. We shall show that this must be so.

Since $r \ge 4$, $2c_r\le 1$, and since $c_r m > d(V_{r-1})$, $l\ge 2$. So
\begin{eqnarray*}
\sum_{i=1}^rd(V_i) &<& 2c_r m(r-k-l)+c_r ml+mk \\
&=& 2c_r mr -c_r lm + (1-2c_r)km.
\end{eqnarray*}
However, by Lemma \ref{aaa},
\begin{equation*}
\sum_{i=1}^r d(V_i) > (r+1)m-rc_r m,
\end{equation*}
so
\begin{equation*}
2rc_r -lc_r + k(1-2c_r) > (r+1)-rc_r;
\end{equation*}
however, if $k<l$ then
\begin{eqnarray*}
2rc_r -lc_r + k(1-2c_r) &\le& 2rc_r-lc_r+(l-1)(1-2c_r) \\
&=& (2r-1)c_r+(l-1)(1-3c_r)\,.
\end{eqnarray*}
Since $l\ge 2$ and $1-3c_r<0$,
\begin{eqnarray*}
(2r-1)c_r+(l-1)(1-3c_r)&\le& (2r-4)c_r+1 \\
&=& r+1-rc_r
\end{eqnarray*}
(the final equality follows since $c_r=\frac{r}{3r-4}$), and a contradiction is obtained.

\vspace{5pt}
\textbf{Case 3.} $c_r m > d(V_{r-1})$ and $\frac{c_rm}{2} > d(V_r)$.

Suppose that $j$ parts meet at least $2c_r m$ edges, $k$ meet at least $\frac{c_rm}{2}$ but fewer than $c_r m$, 
$l$ meet fewer than $\frac{c_rm}{2}$, and the remaining $r-j-k-l$ meet at least $c_r m$ but fewer than $2c_r m$. 
Using Lemma \ref{rugoodbad} $k$ times and Lemma \ref{rutwothree} $l$ times, we may find $r$ disjoint parts which meet at least $c_r m$ edges
provided that $j\ge k+2l$ (each application of Lemma \ref{rugoodbad} requires one part which meets $2c_r m$ edges and each application of Lemma \ref{rutwothree} requires two). We shall show that this must be so.

From Lemma \ref{aaa}, since $d(V_r) < \frac{c_rm}{2}$ and $r\ge 4$, 
\begin{eqnarray*}
\frac{1}{r}\sum_{i=1}^r \frac{d(V_i)}{m} &>& \frac{r+1}{r}-\frac{c_r}{2} \\
&=& 2c_r+\frac{r+1}{r}-\frac{5c_r}{2} \\
&=& 2c_r+\frac{r^2-2r-8}{r(6r-8)} \\
&\ge& 2c_r;
\end{eqnarray*}
similarly, since $c_r\le\frac{1}{2}$,
\begin{eqnarray*}
\frac{1}{r}\sum_{i=1}^r \frac{d(V_i)}{m} &>& \frac{r+1}{r}-\frac{c_r}{2} \\
&>& \frac{3}{4} \\
&\ge& \frac{2}{3}+\frac{c_r}{6};
\end{eqnarray*}
so, since $c_r>\frac{1}{3}$ for every $r$, Lemma \ref{rulast}, following, ensures that $j\ge k+2l$.
\end{proof}

We now give the result needed to fill in the gap.

\begin{lemma}\label{rulast}
Let $\frac{1}{3}\le c\le \frac{1}{2}$. If we have a finite, non-empty collection of numbers in $[0,1]$, whose 
arithmetic mean $a$ is at least $\max\left(2c,\frac{2}{3}+\frac{c}{6}\right)$, of which $j$ are at least $2c$, 
$k$ are at least $\frac{c}{2}$ but less than $c$, $l$ are less than $\frac{c}{2}$, and the rest are at least 
$c$ but less than $2c$, then $j\ge k+2l$. 
\end{lemma}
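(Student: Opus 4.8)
The plan is to split the collection into its four natural types, bound the total sum from above in terms of the type counts, and compare this against the two lower bounds on the sum supplied by the mean. Write $j$ for the number of terms that are at least $2c$, write $s$ for the number in $[c,2c)$, write $k$ for the number in $[\frac{c}{2},c)$, and write $l$ for the number below $\frac{c}{2}$; let $n=j+s+k+l\ge 1$ be the total. Since every term lies in $[0,1]$ and is bounded above by the top of its interval, the sum $\Sigma$ of all the numbers satisfies
\[
\Sigma \le j + 2cs + ck + \tfrac{c}{2}l.
\]
On the other hand $\Sigma = an$, and the hypothesis gives both $\Sigma \ge 2cn$ and $\Sigma \ge (\tfrac{2}{3}+\tfrac{c}{6})n$. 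The whole argument is a comparison of these inequalities; the only real decision is which of the two lower bounds to feed in, and this is dictated by the sign of the coefficient of $s$ in the resulting expression.

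First I would treat the range $c \ge \frac{4}{11}$ using $\Sigma \ge 2cn$. Substituting $n=j+s+k+l$ and cancelling, the $s$-terms vanish exactly (the type $[c,2c)$ contributes at most $2c$ per element, matching the mean threshold), leaving
\[
(1-2c)j \ge ck + \tfrac{3c}{2}l.
\]
When $c<\frac12$ this rearranges to $j \ge \frac{c}{1-2c}k + \frac{3c}{2(1-2c)}l$, and since $c\ge\frac13$ forces $\frac{c}{1-2c}\ge 1$ while $c\ge\frac{4}{11}$ forces $\frac{3c}{2(1-2c)}\ge 2$, we obtain $j\ge k+2l$. The boundary $c=\frac12$ is immediate: then $1-2c=0$ forces $k=l=0$, so $j\ge 0=k+2l$.

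For the complementary range $c\le\frac{4}{11}$ I would instead use $\Sigma \ge (\frac{2}{3}+\frac{c}{6})n$. Writing $\alpha=\frac{2}{3}+\frac{c}{6}$, the coefficient of $s$ is now $\alpha-2c=\frac{4-11c}{6}\ge 0$, so that term may simply be discarded, giving
\[
(1-\alpha)j \ge (\alpha-c)k + (\alpha-\tfrac{c}{2})l.
\]
Here the key computation is that $\frac{\alpha-c/2}{1-\alpha}=2$ identically, while $\frac{\alpha-c}{1-\alpha}=\frac{4-5c}{2-c}\ge 1$ for all $c\le\frac12$; together these yield $j\ge k+2l$ once more. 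I expect the main obstacle to be purely organisational rather than analytic: choosing the correct lower bound in each range (the threshold $c=\frac{4}{11}$ is exactly where $2c$ and $\alpha$ cross, and also exactly where the coefficient $\frac{3c}{2(1-2c)}$ first reaches $2$), together with disposing of the degenerate endpoint $c=\frac12$. Once the case split is fixed, each side reduces to verifying two elementary rational inequalities, the coefficient of $l$ being the binding one in both cases.
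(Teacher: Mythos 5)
Your proof is correct, but it takes a genuinely different route from the paper's. The paper proceeds by induction on $k+l$: if $k\ge 1$, it pairs a number below $c$ with a number exceeding the mean (which is necessarily at least $2c$, hence one of the $j$); the pair has mean at most $\frac{1+c}{2}\le 2c\le a$, so it can be removed without dropping the mean of the rest below $a$. If $l\ge 1$, it first shows $j\ge 2$ (otherwise the total would be less than $2cn$, using $c\ge\frac13>\frac27$), removes a triple consisting of the small number and two numbers at least $2c$, whose mean is at most $\frac{2}{3}+\frac{c}{6}\le a$, and again invokes induction. Your argument is instead a one-shot averaging comparison: bound the sum above by the right endpoints of the four ranges, below by $n$ times one of the two mean thresholds, and compare coefficients, splitting at $c=\frac{4}{11}$, exactly where the thresholds $2c$ and $\frac{2}{3}+\frac{c}{6}$ cross, so that in each regime the coefficient of your $s$ (the count in $[c,2c)$) either cancels or can be discarded. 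I checked your computations and they are sound: for $c\ge\frac{4}{11}$ you need $\frac{c}{1-2c}\ge 1$ (equivalent to $c\ge\frac13$) and $\frac{3c}{2(1-2c)}\ge 2$ (equivalent to $c\ge\frac{4}{11}$), with the endpoint $c=\frac12$ forcing $k=l=0$ outright; for $c\le\frac{4}{11}$, writing $\alpha=\frac{2}{3}+\frac{c}{6}$, the identity $\frac{\alpha-c/2}{1-\alpha}=2$ and the bound $\frac{\alpha-c}{1-\alpha}=\frac{4-5c}{2-c}\ge 1$ (for $c\le\frac12$) do the job. What the paper's induction buys is a constructive pairing of each deficient part with the surplus parts that absorb it, mirroring precisely how the lemma is consumed in Theorem \ref{rumain} (one part meeting $2c_rm$ edges per application of Lemma \ref{rugoodbad}, two per application of Lemma \ref{rutwothree}); what your version buys is transparency about the two thresholds in the hypothesis, showing that in each range of $c$ only the larger of the two is actually needed, and that the coefficient of $l$ is the binding constraint on both sides of $c=\frac{4}{11}$.
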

\begin{proof}
We proceed by induction on $k+l$; if $k=l=0$ then there is nothing to prove. If $k\ge 1$ then one of our 
numbers is less than the mean, so another must exceed the mean; since the mean of those 
two numbers is at most $\frac{1+c}{2}\le 2c \le a$, either there are no other numbers (in which case we are 
done) or the mean of the remaining numbers is at least $a$ and we are done by induction.
If $l\ge 1$ then one number is less than $\frac{c}{2}$; if also $j\le 1$ then the total is less than 
$1+\frac{c}{2}+(n-2)2c<2cn$, so there must be two numbers which are at least $2c$. Since the mean of these 
three numbers is at most $\frac{2}{3}+\frac{c}{6}\le a$, either there are no other numbers (in which case we 
are done) or the mean of the remaining numbers is at least $a$ and again we are done by induction.
\end{proof}
The value of $c_r$ which we use is tight only in the second case of Theorem \ref{rumain} (for $r>4$); if we were to use instead some $c'_r > c_r$ it would be feasible for two of our parts to meet just under $c'_r$ edges while the remaining parts each met just under $2c'_r$ edges, giving us no immediate way to use Lemmas \ref{rutwothree} and \ref{rugoodbad}. To improve further on these bounds using a similar method, then, we might seek to prove analogues of Lemmas \ref{rutwothree} and \ref{rugoodbad} for parts which meet more than $\beta cm$ for some $1<\beta<2$. To do this, however, we would need a way to impose some stronger assumption on the degrees of vertices than simply $\Delta(G)<cm$.

\end{document}